\newtheorem{theorem}{Theorem}[section]
\newtheorem{lemma}[theorem]{Lemma}
\newtheorem{claim}[theorem]{Claim}
\theoremstyle{definition}
\newtheorem{definition}[theorem]{Definition}
\theoremstyle{remark}
\newtheorem{remark}[theorem]{Remark}
\newtheorem{example}[theorem]{Example}
\newenvironment{notation and conventions}{\textbf{Notation and conventions.}}{ }
\DeclareFontFamily{U}{rsf}{} \DeclareFontShape{U}{rsf}{m}{n}{ <5> <6> rsfs5 <7> <8> <9> rsfs7 <10-> rsfs10}{}
\DeclareMathAlphabet\Scr{U}{rsf}{m}{n}
\definecolor{pink}{rgb}{1,0,1}
\begin{document}

 %%%%%%%%%%%%%%%%%%%%%%%%%%%%%%%%%%%%%%%%%%%%%%%%%%%%%%%%%%%
%%\begin{titlepage}
\begin{center}
\baselineskip=14pt{\LARGE
 On Milnor classes via invariants of singular subschemes\\
}
\vspace{1.5cm}
{\large  James Fullwood$^{\spadesuit}$
  } \\
\vspace{.6 cm}

${}^\spadesuit$Institute of Mathematical Research, The University of Hong Kong, Pok Fu Lam Road, Hong Kong.\\

\end{center}

\vspace{1cm}
\begin{center}

{\bf Abstract}
\vspace{.3 cm}
\end{center}

{\small
We derive a formula for the Milnor class of scheme-theoretic global complete intersections (with arbitrary singularities) in a smooth variety in terms of the Segre class of its singular scheme. In codimension one the formula recovers a formula of Aluffi for the Milnor class of a hypersurface.  }

 \tableofcontents{}

%%\vfill

%%\noindent $^\spadesuit$Email: {fullwood at maths.hku.hk}\\

%%\addtocounter{page}{1}

%%\newpage

%%\end{titlepage}

\section{Introduction}\label{intro}
Milnor classes are a generalization (at the level of classes in a Chow group) of a numerical invariant John Milnor associated with an isolated singularity of a complex hypersurface in his seminal monograph``Singular points of complex hypersurfaces'' \cite{Milnor}. More precisely, given a (possibly singular) subscheme $X$ of a smooth ambient variety $M$ (proper over an algebraically closed field of characterstic zero) its Milnor class is an element of its Chow group supported on the singular locus of $X$ which we denote by $\mathcal{M}(X)$, and is defined as\footnote{At the moment we blindly ignore any sign conventions one may associate with this class.}

\[
\mathcal{M}(X):=c_{\text{SM}}(X)-c_{\text{FJ}}(X),
\]
\\
where $c_{\text{SM}}(X)$ denotes the \emph{Chern-Schwartz-MacPherson (or simply CSM) class} of $X$ and $c_{\text{FJ}}(X)$ denotes the \emph{Fulton-Johnson class} of $X$ (for the uninitiated we recommend \cite{CharacteristicSingularVariety}). Both Chern-Schwartz-MacPherson and Fulton-Johnson classes are generalizations of Chern classes to the realm of singular varieties, and as such specialize to the total homology Chern class in the case that $X$ is smooth. For $X$ a complete intersection in some smooth ambient variety $M$, the Fulton-Johnson class coincides with a `canonical class' for singular varieties defined by William Fulton \cite{IntersectionTheory} for any subscheme of a smooth variety, which we refer to as the \emph{Fulton class}. As we will restrict our attention to Milnor classes of complete intersections in a smooth variety $M$, in the definition of Milnor class we may replace the Fulton-Johnson class of $X$ by the Fulton class of $X$, which we denote by $c_{\text{F}}(X)$. The justification of the moniker ``Milnor class" is that if $X$ is a hypersurface with isolated singularities then (up to sign)

\[
\mathcal{M}(X)= ``\text{the sum of the Milnor numbers over each singular point of $X$}",
\]
\\
and thus captures the essence of `Milnor number' on a global level. Milnor classes are then a vast generalization of Milnor's invariant, as they exist for arbitrary singularities and reside in a Chow group. As the Fulton class of $X$ coincides with the total Chern class of a smooth variety in the same rational equivalence class as $X$, we may view the Milnor class (of a complete intersection) as measuring the difference between $c_{\text{SM}}(X)$ and the Chern class of a smooth deformation of $X$ (parametrized by $\mathbb{P}^1$). Numerically speaking, since (over $\mathbb{C}$) $\int_X c_{\text{SM}}(X)=\chi_{\text{top}}(X)$ , integration of the Milnor class of $X$ measures the deviation of the (topological) Euler characteristic of $X$ from that of a smooth deformation. However, topological characterizations of the higher dimensional Milnor classes remain elusive at best.

In recent years much work has gone into the investigation of such classes and as such much progress in our understanding of these (at least from a computational perspective) has been made (e.g., \cite{MilnorMaxim}\cite{MR1873009}\cite{MR1902644}\cite{MR3053711}\cite{MCLCI}). As many insights have come predominantly from a topological/complex-analytic perspective (e.g. in terms of the geometry of a Whitney stratification of $X$), we adopt the perspective of Aluffi and seek a purely algebraic characterization in terms of (intersection-theoretic) invariants of a natural scheme structure on the singular locus of $X$. For $X$ a hypersurface Aluffi has proved\footnote{This is actually a formula for $i_*\mathcal{M}(X)$, where $i:X\hookrightarrow M$ is the inclusion. Moreover, here and throughout we omit pushforwards (and pullbacks) via inclusions.} \cite{MR1697199}

\begin{equation}
\mathcal{M}(X)=\frac{c(TM)}{c(\mathscr{O}(X))}\cap \left(s(Y,M)^{\vee} \otimes_M \mathscr{O}(X)\right),
\end{equation}
\\
where `$^{\vee}$' and `$\otimes_M$' here are intersection-theoretic operations which we recall in section \S\ref{td}, $s(Y,M)$ denotes the \emph{Segre class}\footnote{We recommend either of \cite{CharacteristicSingularVariety}\cite{IntersectionTheory} for a nice introduction to Segre classes.} of the singular \emph{subscheme} $Y$ of $X$ in $M$ and by `singular subscheme' we mean the subscheme of $X$ whose ideal sheaf is the restriction to $X$ of the ideal sheaf over $M$ which is locally generated by a defining equation for $X$ and each of its corresponding partial derivatives. Thus from an algebraic perspective, the Milnor class of a hypersurface is in essence captured by the Segre class of its singular scheme.

In the closing line of \cite{MR2007377}, Aluffi states ``...It is not even known whether $m(X,M)$--and hence the Milnor class of $X$--is determined by the singular subscheme of $X$, \emph{even when $X$ is a complete intersection of codimension 2.}'' ($m(X,M)$ is the unique class such that $\mathcal{M}(X)=c(TM)\cap m(X,M)$). Our aim in this note is to provide a partial response to this inquiry for every codimension.

So let $M$ be a smooth algebraic variety over an algebraically closed field of characteristic zero, and let $X$ be a \emph{global} complete intersection corresponding to the zero-scheme of a section of a vector bundle $\mathscr{E}\to M$ (note that our assumption that $X$ is a global complete intersection implies that $\mathscr{E}$ splits). Our situation is constrained by the fact that we assume that if $X$ is cut out by $k=\text{rk}(\mathscr{E})$ hypersurfaces $M_1, \ldots, M_k$, then $M_{_1}\cap \cdots \cap M_{k-1}$ is (scheme-theoretically) smooth\footnote{The strength of this assumption seems to grow with the codimension of $X$.} (surely the ordering of the indicies is irrelevant here). Let $\mathscr{L}\to X$ denote the line bundle associated with the divisor $M_k$, and denote by $Y$ the singular (sub)scheme of $X$ (whose precise definition is given in \S\ref{proof}). Under these assumptions, our result is the following\footnote{The formula involves two notions of `$\otimes$': the traditional tensor product of vector bundles which we denote by `$\otimes$', and an intersection-theoretic operation `$\otimes_M$', which we define in section \S\ref{td}.}

\begin{theorem}\label{mt} Let $X$ be a global complete intersection corresponding to the zero-scheme of a section of a vector bundle $\mathscr{E}\to M$, subject to the assumptions above. Then

\begin{equation}
\mathcal{M}(X)=\frac{c(TM)}{c(\mathscr{E})}\cap \left(c(\mathscr{E^{\vee}\otimes \mathscr{L}})\cap (s(Y,M)^{\vee}\otimes_M \mathscr{L}) \right).
\end{equation}
\\
\end{theorem}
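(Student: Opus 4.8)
The guiding idea is that the smoothness hypothesis on $X':=M_1\cap\cdots\cap M_{k-1}$ turns $X$ into a \emph{hypersurface} inside the smooth variety $X'$, namely the divisor cut out on $X'$ by $M_k$. The plan is therefore to apply Aluffi's hypersurface formula (1.1) with the ambient smooth variety taken to be $X'$ rather than $M$, and then to translate every $X'$-relative invariant into an $M$-relative one. Writing $\mathscr{E}=\mathscr{E}'\oplus\mathscr{O}(M_k)$ with $\mathscr{E}'=\bigoplus_{i<k}\mathscr{O}(M_i)$, so that $N_{X'/M}=\mathscr{E}'|_{X'}$ and $\mathscr{L}=\mathscr{O}(M_k)|_X=N_{X/X'}$, the codimension-one formula gives
\[
\mathcal{M}(X)=\frac{c(TX')}{c(\mathscr{L})}\cap\bigl(s(Y,X')^{\vee}\otimes_{X'}\mathscr{L}\bigr),
\]
where now $Y$ is the singular subscheme of the hypersurface $X\subset X'$ and $\otimes_{X'}$ is the twisting operation performed in $A_*X'$.

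Before this can be used I must justify two invariance statements. First, the Milnor class is unchanged by the passage from $M$ to $X'$: the CSM class is intrinsic to $X$, while the Fulton (Fulton--Johnson) class is $c(T_X^{\mathrm{vir}})\cap[X]$ for the virtual tangent bundle, which for $X\subset M$ is $TM|_X-\mathscr{E}|_X$ and for $X\subset X'$ is $TX'|_X-\mathscr{L}$; since $TX'=TM|_{X'}-\mathscr{E}'|_{X'}$ in $K$-theory these two virtual bundles coincide, so the Fulton classes, and hence the Milnor classes, agree. Second, I must check that the singular subscheme $Y$ produced by the definition in \S\ref{proof} (via the section of $\mathscr{E}$) agrees with the singular subscheme of $X$ regarded as a hypersurface in $X'$; this is precisely the point at which the smoothness of $X'$ is used, and verifying the equality of these scheme structures is the first genuine task.

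The remaining work is the translation of invariants. The tangent term is immediate: $c(TX')=c(TM)/c(\mathscr{E}')$ and $c(\mathscr{E})=c(\mathscr{E}')c(\mathscr{L})$ give $c(TX')/c(\mathscr{L})=c(TM)/c(\mathscr{E})$. For the Segre class I will use the standard comparison for the regular embedding $X'\hookrightarrow M$, namely $s(Y,X')=c(\mathscr{E}'|_Y)\cap s(Y,M)$. It then remains to massage the operational factor. Since a dimension-$j$ class has codimension in $X'$ smaller by $k-1$ than its codimension in $M$, one has $\beta\otimes_{X'}\mathscr{L}=c(\mathscr{L})^{k-1}\cap(\beta\otimes_M\mathscr{L})$ for any $\beta$ supported on $Y$; combining this with the duality identity $(c(\mathscr{E}')\cap\gamma)^{\vee}=c((\mathscr{E}')^{\vee})\cap\gamma^{\vee}$ and the twisting identity $(c(\mathscr{V})\cap\beta)\otimes_M\mathscr{L}=c(\mathscr{L})^{-\operatorname{rk}\mathscr{V}}c(\mathscr{V}\otimes\mathscr{L})\cap(\beta\otimes_M\mathscr{L})$ applied to $\mathscr{V}=(\mathscr{E}')^{\vee}$ (rank $k-1$) yields, after the factors $c(\mathscr{L})^{k-1}$ cancel,
\[
s(Y,X')^{\vee}\otimes_{X'}\mathscr{L}=c((\mathscr{E}')^{\vee}\otimes\mathscr{L})\cap\bigl(s(Y,M)^{\vee}\otimes_M\mathscr{L}\bigr).
\]
Finally, because $\mathscr{L}=\mathscr{O}(M_k)|_X$, the missing summand satisfies $\mathscr{O}(M_k)^{\vee}\otimes\mathscr{L}\cong\mathscr{O}_X$, so $c((\mathscr{E}')^{\vee}\otimes\mathscr{L})=c(\mathscr{E}^{\vee}\otimes\mathscr{L})$, and substituting everything back gives the asserted formula.

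I expect the main obstacle to be twofold and to lie outside the (essentially mechanical) Chern-class bookkeeping of the last paragraph: first, establishing that the scheme structure on $Y$ coming from the section of $\mathscr{E}$ really does reduce to that of $X$ as a hypersurface in $X'$ (this is where the hypothesis that $M_1\cap\cdots\cap M_{k-1}$ be smooth is indispensable, and its failure would break the reduction); and second, confirming the Segre comparison $s(Y,X')=c(\mathscr{E}'|_Y)\cap s(Y,M)$ for the arbitrarily singular, possibly non-reduced subscheme $Y$, since the whole argument rests on moving the Segre class freely between the two ambient spaces. The sign conventions attached to $\vee$ are suppressed throughout, in keeping with the convention adopted for $\mathcal{M}(X)$.
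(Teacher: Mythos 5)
Your proposal is correct and follows essentially the same route as the paper: apply Aluffi's hypersurface formula inside the smooth complete intersection $X'=Z$, identify $c(TZ)/c(\mathscr{L})$ with $c(TM)/c(\mathscr{E})$ via the virtual tangent bundle, compare $s(Y,Z)$ with $s(Y,M)$, and finish with the same dual/tensor bookkeeping. The two obstacles you flag are handled in the paper by citation rather than by new argument: well-definedness of the singular scheme structure follows from general results on Fitting ideals, and the Segre class comparison $s(Y,Z)=c(N_ZM)\cap s(Y,M)$ holds because $Y$ lies in the smooth subvariety $Z$ and is therefore linearly embedded in $M$ in Aluffi's sense.
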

In the hypersurface case, $X$ is the zero-scheme of a rank one bundle $\mathscr{E}$, with $\mathscr{L}=\mathscr{E}=\mathscr{O}(X)$, and thus (1.2) recovers the formula (1.1) of Aluffi in codimension one. Surely the assumption that $M_1\cap \cdots \cap M_{k-1}$ is smooth prohibits this formula from being representative at the level of full generality, as it forces the formula to depend lopsidedly on $\mathscr{L}$ compared to the line bundles corresponding to the other hypersurfaces which cut out $X$. In any case, we note that the utility formula (1.2) is at least two-fold. Not only only does such a formula give a precise characterization of the Milnor class of a complete intersection in terms of its singular scheme, the formula may be `inverted' to yield a formula for the Segre class of its singular scheme in terms of its Milnor class. As Segre classes are in general very difficult to compute directly from their definition, alternative means of computing Segre classes are very desirable, especially in the context of enumerative geometry. As an illustration, we compute a non-trivial Segre class in terms of Milnor classes in section \S\ref{ex}.

In what follows we review the intersection-theoretic calculus of the operations `$^{\vee}$' and `$\otimes_M$', prove the theorem, then give some examples and applications.
\\

\emph{Acknowledgements.} The author would like to thank Paolo Aluffi for useful discussions throughout the course of this project, which not only identified several inaccuracies but were instrumental to its completion.

\section{The `tensorial' and `dual' operations}\label{td} In \cite{MR1316973}, two intersection-theoretic operations on classes in a Chow group were introduced which not only streamline many intensive computations, but also often provide compact ways to write seemingly complicated formulas. In particular, let $M$ be a variety and denote its Chow group by $A_*M$. We write a class $\alpha\in A_*M$ as $\alpha=\alpha^0+\cdots +\alpha^n$, where $\alpha^i$ is the component of $\alpha$ of \emph{codimension} $i$ (in $M$). We denote by $\alpha^{\vee}$ the class\footnote{We note that the map $\alpha\mapsto \alpha^{\vee}$ coincides with the map $\tau$ defined in \cite{MR2141853}.}
\[
\alpha^{\vee}:=\sum (-1)^i \alpha^i.
\]
The justification for the `dual' notation is straightforward: If $\mathscr{E}\to M$ is a vector bundle with total Chern class $c(\mathscr{E})=\sum c_i(\mathscr{E})$ then $c(\mathscr{E}^{\vee})=\sum (-1)^ic_i(\mathscr{E})=c(\mathscr{E})^{\vee}$. Next, we introduce an action of the Picard group of $M$ on $A_*M$. Given a line bundle $\mathscr{L}\to M\in \text{Pic}(M)$ we define its action on $\alpha=\sum \alpha^i\in A_*M$ as

\[
\alpha \otimes_M \mathscr{L}:= \sum \frac{\alpha^i}{c(\mathscr{L})^i}.
\]
It is also straightforward to see that this honestly defines an action of $\text{Pic}(M)$ on $A_*M$ \cite{MR1316973} (i.e., $(\alpha \otimes_M \mathscr{L})\otimes_M \mathscr{M}=\alpha \otimes_M (\mathscr{L}\otimes \mathscr{M})$) . These innocuous definitions lend their utility throughout this note via the following two formulas:

\begin{equation}
\left(c(\mathscr{E})\cap \alpha\right)^{\vee}=c(\mathscr{E}^{\vee})\cap \alpha^{\vee},
\end{equation}
\begin{equation}
\left(c(\mathscr{E})\cap \alpha\right) \otimes_M \mathscr{L}=\frac{c(\mathscr{E}\otimes \mathscr{L})}{c(\mathscr{L})^r}\cap \left(\alpha \otimes_M \mathscr{L}\right),
\end{equation}
\\
where $r$ is the rank of $\mathscr{E}$ in the Grothendieck group of vector bundles on $M$. The proofs of these formulas may be found in \cite{MR1316973}, which follow directly from the definitions. As an illustration of how these operations may `compactify' a cumbersome formula, consider the simple expression for the class

\[
s(X\setminus Y):=\frac{1}{c(\mathscr{O}(X))}\cap \left(s(Y,M)^{\vee} \otimes_M \mathscr{O}(X)\right),
\]
\\
which appears in Aluffi's hypersurface formula (1.1). Without the `tensor' and `dual' operations, the most efficient way of a giving a formula for $s(X\setminus Y)$ is to give a formula for its component of  dimension $m$, which reads \cite{MR1316973}

\[
s(X\setminus Y)_m=s(X,M)_m+(-1)^{n-m}\sum_{j=0}^{n-m}\left(\begin{array}{c} n-m \\ j \end{array}\right)X^j\cdot s(Y,M)_{m+j},
\]
\\
where $n$ is the dimension of $M$ and by $X^j$ we mean the $j$-fold intersection product of (the divisor class associated with) $X$ with itself. So not only do the tensor and dual operations dispense of the appearance of complicated summations involving binomial coefficients, they provide a means of succinctly capturing all components of $s(X\setminus Y)$ at once. Moreover, computations throughout the rest of this note will serve as illustrations of their computational utility.

We conclude this section with a lemma needed for the proof of Theorem \ref{mt} (we omit the proof as it follows immediately from the definition of the tensorial operation).

\begin{lemma}\label{tensor} Let $M$ be a variety, $M'\overset{i} \hookrightarrow M$ be a regular embedding of codimension $d$, $\alpha\in A_*M'$ and let $\mathscr{L}\to M$ be a line bundle. Then

\begin{equation}
\alpha\otimes_{M'} i^*\mathscr{L} =c(\mathscr{L})^d \cap \left(\alpha\otimes_M \mathscr{L}\right).
\end{equation}

\end{lemma}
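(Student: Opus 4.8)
The plan is to unwind both sides directly from the definition of the tensorial operation, the only real content being the bookkeeping that relates the codimension of a class measured in $M'$ to its codimension measured in $M$. First I would fix the grading convention. Under the stated convention of suppressing the pushforward $i_*$, a class $\alpha\in A_*M'$ is simultaneously regarded as a class in $A_*M$, and a piece of $\alpha$ of dimension $m$ has codimension $\dim M'-m$ in $M'$ but codimension $\dim M-m=(\dim M'+d)-m$ in $M$, since $i$ is a regular embedding of codimension $d$. Thus if I write $\alpha=\sum_i\beta_i$, where $\beta_i$ is the component of codimension $i$ in $M'$, the \emph{same} class $\beta_i$ is the component of codimension $d+i$ in $M$.

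With this in hand I would simply compute. Applying the definition of $\otimes_{M'}$ directly gives
\[
\alpha\otimes_{M'}i^*\mathscr{L}=\sum_i\frac{\beta_i}{c(i^*\mathscr{L})^i},
\]
whereas applying $\otimes_M$, and using the codimension-$(d+i)$ grading of $\beta_i$ in $M$, gives
\[
\alpha\otimes_M\mathscr{L}=\sum_i\frac{\beta_i}{c(\mathscr{L})^{d+i}}.
\]
Capping the latter with $c(\mathscr{L})^d$ yields $\sum_i\beta_i/c(\mathscr{L})^i$, so matching the two sides reduces to the single identity $\beta_i/c(i^*\mathscr{L})^i=\beta_i/c(\mathscr{L})^i$, with the factor $c(\mathscr{L})^d$ in the statement being precisely what absorbs the shift by $d$ between the two gradings.

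This last identity is where the only nontrivial point enters, and I would justify it via the projection formula. Since $\mathscr{L}$ is a line bundle, $c(\mathscr{L})=1+c_1(\mathscr{L})$, so $1/c(\mathscr{L})^i$ is a power series in the divisor class $c_1(\mathscr{L})$; capping $\beta_i\in A_*M'$ with $c_1(i^*\mathscr{L})$ and then pushing forward agrees with capping $i_*\beta_i$ with $c_1(\mathscr{L})$, by the projection formula for the regular embedding $i$ together with the compatibility $c_1(i^*\mathscr{L})=i^*c_1(\mathscr{L})$. Iterating, one sees that under the convention of omitting $i_*$, dividing by $c(i^*\mathscr{L})^i$ on $M'$ is identified with dividing by $c(\mathscr{L})^i$ on $M$, which finishes the argument. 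I do not anticipate any genuine obstacle here: the whole statement is a formal consequence of the definitions once the codimension shift by $d$ is made explicit, and the only place demanding care is exactly this bookkeeping.
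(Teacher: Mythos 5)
Your proposal is correct and is precisely the argument the paper has in mind: the paper omits the proof, remarking only that it ``follows immediately from the definition of the tensorial operation,'' and your computation makes that explicit — the codimension of a component of $\alpha$ shifts by $d$ when regarded in $M$ rather than $M'$, the factor $c(\mathscr{L})^d$ absorbs exactly this shift, and the identification of capping with $c(i^*\mathscr{L})^{-1}$ on $M'$ versus $c(\mathscr{L})^{-1}$ on $M$ is the projection formula under the suppressed pushforward $i_*$. No gaps; this is the intended proof.
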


\section{The proof(s)}\label{proof}

We now provide a proof of Theorem \ref{mt}, along with a sketch of an alternative proof in codimension two which may shed light on the more general case of an arbitrary global complete intersection.

Denote by $M$ a smooth ambient variety and let $X$ be a (global) complete intersection (in $M$) corresponding to the zero-scheme of a section of a vector bundle $\mathscr{E}\to M$ whose rank we denote by $k$. We assume that there exists a smooth complete intersection $Z$ in $M$ of codimension $k-1$ such that $X=Z\cap M_k$ (scheme-theoretically), where $M_k$ is a hypersurface in $M$. Denote the line bundle corresponding to $M_k$ by $\mathscr{L}$ and let $Y$ denote the \emph{singular scheme} of $X$, which we now define:

\begin{definition} Let $M$ be smooth of dimension $n$ and let $X\subset M$ be a (local) complete intersection of codimension $k$. Then we define the \emph{singular (sub)scheme of} $X$ to be the subscheme whose ideal sheaf over $X$ is the restriction to $X$ of the ideal sheaf over $M$ which is locally generated by a set of local defining equations ($F_1(x_1,\ldots,x_n)=\cdots=F_k(x_1,\ldots ,x_n)=0$) of $X$ along with the $k\times k$ minors of the corresponding matrix of partial derivatives  $a_{ij}=\frac{\partial F_i}{\partial x_j}$ (i.e., the $k$th \emph{Fitting ideal} of the coordinate ring of the corresponding affine open subscheme of $X$).
\end{definition}

\begin{remark} As $X$ may often be embedded in different smooth varieties as a complete intersection, there arises an issue as to whether the notion of singular scheme given above is well defined. However, it follows from general results in the theory of Fitting ideals that indeed it is \cite{MR1322960}.
\end{remark}

\begin{proof}[Proof of Theorem \ref{mt}]. By assumption $X$ is a hypersurface in $Z$, thus Aluffi's hypersurface formula (1.1) yields

\begin{equation}
\mathcal{M}(X)=\frac{c(TZ)}{c(\mathscr{L})}\cap \left(s(Y,Z)^{\vee}\otimes_Z \mathscr{L}\right).
\end{equation}
\\
Since $X$ is also a complete intersection in $M$,

\[
\frac{c(TM)}{c(\mathscr{E})}=c(TX_{\text{vir}})=\frac{c(TZ)}{c(\mathscr{L})},
\]
where $TX_{\text{vir}}$ denotes the virtual tangent bundle of $X$. Moreover, since $Y$ is a subscheme of $Z$, which in turn is a smooth subvariety of $M$, $Y$ is linearly embedded in $M$ \cite{MR1040263}, so

\[
s(Y,Z)=c(N_ZM)\cap s(Y,M),
\]
\\
where $N_ZM$ denotes the normal bundle to $Z$ in $M$. Formula (2.1) then yields

\[
s(Y,Z)^{\vee}=c(N_ZM^{\vee})\cap s(Y,M)^{\vee}.
\]
\\
Thus\footnote{As a consequence of taking duals in $M$ rather than $Z$, the second equality should yield a factor of $(-1)^{k-1}$, which we omit due to the absence of a standard sign convention associated with $\mathcal{M}(X)$ in the literature. We do however envoke a sign convention in \S\ref{ex} that accounts for the `missing' factor of $(-1)^{k-1}$.}

\begin{eqnarray*}
\mathcal{M}(X)&=&\frac{c(TM)}{c(\mathscr{E})}\cap \left((c(N_ZM^\vee)\cap s(Y,M)^{\vee})\otimes_Z \mathscr{L}\right) \\
                        &\overset{(2.3)}=&\frac{c(TM)c(\mathscr{L})^{k-1}}{c(\mathscr{E})}\cap \left((c(N_ZM)^{\vee}\cap s(Y,M)^{\vee})\otimes_M \mathscr{L}\right) \\
                        &\overset{(2.2)}=&\frac{c(TM)c(\mathscr{L})^{k-1}}{c(\mathscr{E})}\cap \left(\frac{c(N_ZM^{\vee}\otimes \mathscr{L})}{c(\mathscr{L})^{k-1}}\cap (s(Y,M)^{\vee}\otimes_M \mathscr{L})\right)\\
                        &=&\frac{c(TM)}{c(\mathscr{E})}\cap \left(c(\mathscr{E}^{\vee}\otimes \mathscr{L})\cap(s(Y,M)^{\vee}\otimes_M \mathscr{L})\right).
\end{eqnarray*}
\\
To arrive at the last equality we cancelled a common factor of $c(\mathscr{L})^{k-1}$ from the numerator and denominator of the previous expression, and then we used the fact that since $\mathscr{E}=N_ZM\oplus \mathscr{L}$, $\mathscr{E}^{\vee}\otimes \mathscr{L}=(N_ZM^{\vee}\otimes \mathscr{L})\oplus \mathscr{O}$, thus

\[
c(\mathscr{E}^{\vee}\otimes \mathscr{L})=c(N_ZM^{\vee}\otimes \mathscr{L}),
\]
\\
concluding the proof.
\end{proof}

We now sketch an alternative proof in codimension two, as we feel it may lead to a proof of the general case in codimension two (i.e. when \emph{both} hypersurfaces cutting out $X$ are possibly singular), which in turn may yield the correct form for a general formula in arbitrary codimension. Before doing so we need the following \cite{MR2007377}

\begin{definition}
Let $X$ be a hypersurface in some smooth ambient variety $M$ and denote its singular scheme by $Y$. We define the \emph{SM-Segre class} of $X$ to be the class

\[
s^{\circ}(X,M):=s(X,M)+c(\mathscr{O}(X))^{-1}\cap \left(s(Y,M)^{\vee}\otimes_M \mathscr{O}(X)\right)\in A_*M.
\]
\end{definition}

Note that by formula (1.1) $\mathcal{M}(X)=c(TM)\cap \left(s^{\circ}(X,M)-s(X,M)\right)$. Another way of saying this is that $c_{\text{SM}}(X)=c(TM)\cap s^{\circ}(X,M)$, so by inclusion-exclusion for Chern-Schwartz-MacPherson classes (3.2) we may now inductively define $s^{\circ}(X,M)$ for $X$ a global complete intersection.

\begin{definition}
Let $X$ be a global complete intersection of codimension $k$ in some smooth variety $M$, and let $M_1,\ldots, M_k$ be hypersurfaces such that $X=M_1\cap \cdots \cap M_k$ (scheme-theoretically). Then we define the \emph{SM-Segre class} of $X$ to be the class

\[
s^{\circ}(X,M):=\sum_{s=1}^{k}(-1)^{s-1}\left(\sum_{i_1< \cdots <i_s}s^{\circ}(X_{i_1}\cup \cdots \cup X_{i_s},M)\right)\in A_*M.
\]
\end{definition}
 Since by inclusion-exclusion $c_{\text{SM}}(X)=c(TM)\cap s^{\circ}(X,M)$, it follows that this definition is independent of the hypersurfaces chosen to cut out $X$.
 \\

 \begin{proof}[Alternative proof in codimension two (sketch)] We assume here that $X=M_1\cap M_2\subset M$ is a complete intersection of codimension two (the extra assumption in Theorem\ref{mt} in this case requires that one of the $M_i$ be smooth, but at this point we make no smoothness assumptions on the $M_i$), and we denote by $\mathscr{L}_i$ the line bundle corresponding to $M_i$. By definiton of Milnor and SM-Segre classes we have

 \[
 \mathcal{M}(X)=c(TM)\cap \left(s^{\circ}(X,M)-s(X,M)\right).
 \]
 \\
 We now compute $s^{\circ}(X,M)-s(X,M)$:

 It follows from Theorem 1.1 in \cite{MR1986112} that

 \begin{eqnarray*}
 s(X,M)&=&s(M_1,M)+s(M_2,M)-s(M_1\cup M_2)- \\
       & &c(\mathscr{L}_1\otimes \mathscr{L}_2)^{-1}\cap (s(X,M)^{\vee}\otimes_M \mathscr{L}_1\otimes \mathscr{L}_2), \\
 \end{eqnarray*}
and from the definition of SM-Segre classes we have

\begin{eqnarray*}
s^{\circ}(X,M)&=&s^{\circ}(M_1,M)+s^{\circ}(M_2,M)-s^{\circ}(M_1\cup M_2,M) \\
              &=&\left(s(M_1,M)+c(\mathscr{L}_1)^{-1}\cap (s(Y_1,M)^{\vee}\otimes_M \mathscr{L}_1)\right) \\
              & &+\left(s(M_2,M)+c(\mathscr{L}_2)^{-1}\cap (s(Y_2,M)^{\vee}\otimes_M \mathscr{L}_2)\right) \\
              & & -\left(s(M_1\cup M_2)+c(\mathscr{L}_1\otimes \mathscr{L}_2)^{-1}\cap
              \left(s(\overline{X},M)^{\vee}\otimes_M \mathscr{L}_1\otimes \mathscr{L}_2\right)\right), \\
\end{eqnarray*}
\\
where $Y_i$ denotes the singular scheme of the possibly singular hypersurface $M_i$ and $\overline{X}$ denotes the singular scheme of $M_1\cup M_2$ (note that $X$ is a subscheme of $\overline{X}$). We then have

\begin{eqnarray*}
s^{\circ}(X,M)-s(X,M)&=&c(\mathscr{L}_1)^{-1}\cap \left(s(Y_1,M)^{\vee}\otimes_M \mathscr{L}_1\right)+c(\mathscr{L}_2)^{-1}\cap \left(s(Y_2,M)^{\vee}\otimes_M \mathscr{L}_2\right) \\
                     & &-c(\mathscr{L}_1\otimes \mathscr{L}_2)^{-1}\cap \left((s(\overline{X},M)-s(X,M))^{\vee}\otimes_M \mathscr{L}_1\otimes \mathscr{L}_2\right) \\
\end{eqnarray*}
Now if one of the $M_i$ are smooth, say $M_1$, then the term $c(\mathscr{L}_1)^{-1}\cap \left(s(Y_1,M)^{\vee}\otimes_M \mathscr{L}_1\right)$ vanishes in the previous equation above, and then using the inclusion-exclusion formula for CSM classes, i.e.,

\begin{equation}
c_{\text{SM}}(M_1\cup M_2)=c_{\text{SM}}(M_1)+c_{\text{SM}}(M_2)-c_{\text{SM}}(M_1\cap M_2),
\end{equation}
\\
one may (after judicious use of the tensorial and dual operations, and then using the smoothness assumption on $M_1$ to compute $c_{\text{SM}}(M_1\cap M_2)$) solve for $(s(\overline{X},M)-s(X,M))^{\vee}$ which appears in the previous equation for $s^{\circ}(X,M)-s(X,M)$, the result of which yields the conclusion of Theorem \ref{mt} in the codimension two case.
\end{proof}

What we find somewhat surprising about this proof is that the expression derived above for $s^{\circ}(X,M)-s(X,M)$ depends solely on the the Segre classes of the singular schemes of $M_1$, $M_2$, $M_1\cup M_2$ and the Segre class of $X$. But after assuming that $M_1$ is smooth and then plugging in our computation of $(s(\overline{X},M)-s(X,M))^{\vee}$ into the formula we derived above for $s^{\circ}(X,M)-s(X,M)$, the term $c(\mathscr{L}_2)^{-1}\cap \left(s(Y_2,M)^{\vee}\otimes_M \mathscr{L}_2\right)$ cancels and all that remains is an expression depending on the Segre class of $Y$ (the singular scheme of $X$). We then naturally suspect that if one may compute $(s(\overline{X},M)-s(X,M))^{\vee}$ without any smoothness assumptions on $M_1$ or $M_2$, the result of which would cancel \emph{both} the contributions of the $c(\mathscr{L}_i)^{-1}\cap \left(s(Y_i,M)^{\vee}\otimes_M \mathscr{L}_i\right)$ in the formula derived for $s^{\circ}(X,M)-s(X,M)$, again yielding an expression which depends only on the singular scheme of $X$. Unfortunately, a means for such a computation presently eludes us.

\section{An example with application}\label{ex}
We now invoke the following sign convention for Milnor classes:

\[
\mathcal{M}(X):=(-1)^{s}(c_{\text{F}}(X)-c_{\text{SM}}(X)),
\]
\\
where $X$ is a complete intersection in some smooth variety $M$ and we set $s$ equal to the parity of the codimension of $X$ in $M$.

\begin{example} Let $X=Q\cap H$, where $Q:(x_0^2-x_1x_2=0)\subset M=\mathbb{P}^4$ is a singular quadric and $H:(x_0=0)$ is a hyperplane, whose class in $A_*\mathbb{P}^4$ we denote by $H$ as well (thus $[Q]=2H$). Then $X$ is the union of two linear subspaces of codimension two in $\mathbb{P}^4$ which intersect along a linear subspace of codimension three, which is the singular scheme $Y$ of $X$ (the singular scheme is reduced in this case). The normal bundle to $X$ in $\mathbb{P}^4$ is then (the restriction to $X$ of) $\mathscr{E}=\mathscr{O}(1)\oplus \mathscr{O}(2)$, and the dual of the Segre class of  $Y$ in $\mathbb{P}^4$ is

\[
s(Y,M)^{\vee}=\left(\frac{-H}{1-H}\right)^3.
\]
\\
Since $Q$ is singular and $H$ is smooth, $\mathscr{L}$ in formula (1.2) is necessarily $\mathscr{O}(2)$. Theorem \ref{mt} then yields\footnote{We also use $H$ to denote $c_{1}(\mathscr{O}(1))$.}

\begin{eqnarray*}
\mathcal{M}(X)&=&(-1)\frac{(1+H)^{5}(1+H)}{(1+H)(1+2H)}\cap \left(\left(\frac{-H}{1-H}\right)^3\otimes_{\mathbb{P}^4}\mathscr{O}(2)\right) \\
                        &=&(-1)\frac{(1+H)^{5}}{(1+2H)}\cap \left(\frac{-H}{(1+2H-H)}\right)^3 \\
                       &=&(-1)\frac{(1+H)^{2}}{(1+2H)}\cap (-H^3) \\
                       &=&[\mathbb{P}^1].
\end{eqnarray*}
\\
One may also compute $\mathcal{M}(X)$ in this example `by hand': Since we know that the singular scheme of $X$ is just a (reduced) line in $\mathbb{P}^4$, we know $\mathcal{M}(X)=[\mathbb{P}^1]+n[pt]$ for some integer $n$. But $n$ is necessarily $\chi_{\text{top}}(\tilde{X})-\chi_{\text{top}}(X)$, where $\tilde{X}$ is a smooth representative of the rational equivalence class of $X$, i.e., a smooth quadric surface. As smooth quadrics are isomorphic to $\mathbb{P}^1\times \mathbb{P}^1$, we have $\chi_{\text{top}}(\tilde{X})=2\cdot 2=4$, and by inclusion-exclusion we have $\chi_{\text{top}}(X)=\chi_{\text{top}}(\mathbb{P}^2)+\chi_{\text{top}}(\mathbb{P}^2)-\chi_{\text{top}}(\mathbb{P}^1)=3+3-2=4$,
thus $n=0$ and $\mathcal{M}(X)=[\mathbb{P}^1]$, as given above by Theorem \ref{mt}.
\end{example}

As mentioned in \S\ref{intro}, formulas for Milnor classes in terms of Segre classes of singular schemes may be used to compute Segre classes, which in general are very difficult to compute from their definition alone. As an illustration we use the previous example to compute the Segre class of the singular scheme of the hypersurface $Z$ which is the union of $Q$ and $H$ as given in Example 4.1., i.e.

\[
Z:(x_0^3-x_0x_1x_2=0)\subset \mathbb{P}^4.
\]
\\
Computing partial derivatives we see the the singular scheme of $Z$, which we denote by $Z_s$, is the subscheme of $Z$ corresponding to the (homogeneous) ideal

\[
 I=(3x_0^2-x_1x_2, x_0x_2, x_0x_1).
 \]
 \\
We now compute $s(Z_s,\mathbb{P}^4)$ by first inverting the formula for the Milnor class of $Z$, which we then relate to the CSM classes of $Q$, $H$ and $X=Q\cap H$ (which all have simple singular schemes), the details of which are given via the proof of the following

\begin{claim} Let $Z_s$ be the singular scheme of the hypersurface $Z$ defined above. Then

\[
s(Z_s,\mathbb{P}^4)=2[\mathbb{P}^2]-4[\mathbb{P}^1]
\]
\\
\end{claim}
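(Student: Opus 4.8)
The plan is to obtain $s(Z_s,\mathbb{P}^4)$ by \emph{inverting} Aluffi's hypersurface formula (1.1) applied to the cubic $Z$, so that the only substantive input is the Milnor class $\mathcal{M}(Z)=c_{\text{SM}}(Z)-c_{\text{F}}(Z)$. First I would compute $c_{\text{F}}(Z)$ directly: since $Z$ is a cubic hypersurface we have $\mathscr{O}(Z)=\mathscr{O}(3)$ and $[Z]=3H$, so $c_{\text{F}}(Z)=\frac{(1+H)^5}{1+3H}\cap 3H$, a routine power-series expansion in $A_*\mathbb{P}^4$ whose degree-zero part should recover $\chi_{\text{top}}$ of a smooth cubic threefold (namely $-6$) as a sanity check.

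The more interesting ingredient is $c_{\text{SM}}(Z)$, which I would assemble by inclusion--exclusion (3.2) from the pieces of Example 4.1, using that $Z=Q\cup H$:
\[
c_{\text{SM}}(Z)=c_{\text{SM}}(Q)+c_{\text{SM}}(H)-c_{\text{SM}}(X),\qquad X=Q\cap H.
\]
Here $c_{\text{SM}}(H)$ is just the Chern class of $\mathbb{P}^3$; $c_{\text{SM}}(X)$ follows from Example 4.1 via $c_{\text{SM}}(X)=c_{\text{F}}(X)+\mathcal{M}(X)$ (equivalently, directly by inclusion--exclusion on the two planes meeting along a line); and $c_{\text{SM}}(Q)=c_{\text{F}}(Q)+\mathcal{M}(Q)$, where $\mathcal{M}(Q)$ is read off from (1.1), the singular scheme of the quadric cone being a reduced line with $s(\mathbb{P}^1,\mathbb{P}^4)=\frac{[\mathbb{P}^1]}{(1+H)^3}$. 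Subtracting $c_{\text{F}}(Z)$ then yields $\mathcal{M}(Z)$, which I expect to equal $2[\mathbb{P}^2]-4[\mathbb{P}^1]+10[\mathbb{P}^0]$; the top-degree coefficient $10$ is cross-checked by $\int\mathcal{M}(Z)=\chi_{\text{top}}(Z)-\chi_{\text{top}}(\widetilde{Z})=4-(-6)$.

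Finally I would run (1.1) backwards. Writing it as $\mathcal{M}(Z)=\frac{(1+H)^5}{1+3H}\cap\bigl(s(Z_s,\mathbb{P}^4)^{\vee}\otimes_{\mathbb{P}^4}\mathscr{O}(3)\bigr)$, I first solve $s(Z_s,\mathbb{P}^4)^{\vee}\otimes_{\mathbb{P}^4}\mathscr{O}(3)=\frac{1+3H}{(1+H)^5}\cap\mathcal{M}(Z)$. Because $\otimes_{\mathbb{P}^4}$ is an action of $\mathrm{Pic}(\mathbb{P}^4)$ it is invertible, so applying $\otimes_{\mathbb{P}^4}\mathscr{O}(-3)$ strips off the twist, and applying the involution $^{\vee}$ strips off the dual; reading off the result should give $s(Z_s,\mathbb{P}^4)=2[\mathbb{P}^2]-4[\mathbb{P}^1]$, the degree-zero component cancelling on its own.

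I expect the main obstacle to be sign bookkeeping rather than any conceptual difficulty. The Milnor-class normalization must be applied uniformly, so that $\mathcal{M}(Q)$, $\mathcal{M}(X)$ and $\mathcal{M}(Z)$ all use the same convention $c_{\text{SM}}-c_{\text{F}}$; in particular one must be careful that Example 4.1 records $\mathcal{M}(X)=[\mathbb{P}^1]=c_{\text{SM}}(X)-c_{\text{F}}(X)$, and reliably computing $c_{\text{SM}}(Q)$ for the singular quadric cone is the most delicate step. One must likewise track the codimension-graded signs of $^{\vee}$ and the twists of $\otimes_{\mathbb{P}^4}$ through both inversions, since an error there would shift the $[\mathbb{P}^1]$ or $[\mathbb{P}^0]$ coefficients; the automatic vanishing of the final $[\mathbb{P}^0]$ term furnishes a convenient internal consistency check.
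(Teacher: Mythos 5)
Your proposal is correct and follows essentially the same route as the paper's proof: both invert Aluffi's formula (1.1) to get $s(Z_s,\mathbb{P}^4)=\bigl(\tfrac{c(\mathscr{O}(3))}{c(T\mathbb{P}^4)}\cap\mathcal{M}(Z)\bigr)^{\vee}\otimes_{\mathbb{P}^4}\mathscr{O}(3)$, and both compute $\mathcal{M}(Z)$ by inclusion--exclusion as $c_{\text{SM}}(Q)+c_{\text{SM}}(H)-c_{\text{SM}}(X)-c_{\text{F}}(Z)$, drawing $c_{\text{SM}}(X)$ from Example 4.1 and $c_{\text{SM}}(Q)$ from (1.1) applied to the quadric cone with its singular line's Segre class. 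Your added consistency checks (the Euler-characteristic cross-check $\int\mathcal{M}(Z)=4-(-6)=10$ and the insistence on the uniform normalization $\mathcal{M}=c_{\text{SM}}-c_{\text{F}}$, which is indeed the one under which $\mathcal{M}(X)=[\mathbb{P}^1]$) agree with the paper's computation.
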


\begin{proof}
By formula (1.1) we have

\[
\mathcal{M}(Z)=\frac{c(T\mathbb{P}^4)}{c(\mathscr{O}(3))}\cap \left(s(Z_s,\mathbb{P}^4)^{\vee}\otimes_{\mathbb{P}^4} \mathscr{O}(3)\right),
\]
\\
thus

\begin{equation}
\frac{c(\mathscr{O}(3))}{c(T\mathbb{P}^4)}\cap \mathcal{M}(Z)=s(Z_s,\mathbb{P}^4)^{\vee}\otimes_{\mathbb{P}^4} \mathscr{O}(3).
\end{equation}
\\
Tensoring both sides of (4.1) by $\mathscr{O}(-3)$ and then taking duals we get

\begin{equation}
s(Z_s,\mathbb{P}^4)=\left(\frac{c(\mathscr{O}(3))}{c(T\mathbb{P}^4)}\cap \mathcal{M}(Z)\right)^{\vee}\otimes_{\mathbb{P}^4} \mathscr{O}(3).
\end{equation}
\\
Now by inclusion-exclusion for CSM classes along with the fact that $c_{\text{SM}}(Z)=c_{\text{F}}(Z)+\mathcal{M}(Z)$ we have

\[
\mathcal{M}(Z)=c_{\text{SM}}(Q)+c_{\text{SM}}(H)-c_{\text{SM}}(X)-c_{\text{F}}(Z).
\]
\\
As the singular scheme of $Q$ is the line $l:(x_0=x_1=x_2=0)\subset \mathbb{P}^4$ (which is the same as the singular scheme of $X$), whose (dual) Segre class was computed in Example 4.1, its CSM class is easily computed to be

\[
c_{\text{SM}}(Q)=\frac{(1+H)^5\cdot 2H-(1+H)^2\cdot H^3}{1+2H}.
\]
\\
Moreover\footnote{We apologize here for denoting both the hyperplane $\{x_0=0\}$ and $c_1(\mathscr{O}(1))$ by $H$.}

\[
c_{\text{SM}}(H)=\frac{(1+H)^5\cdot H}{1+H}, \quad c_{\text{F}}(Z)=\frac{(1+H)^5\cdot 3H}{1+3H},
\]
\\
and by adding $c_{\text{F}}(X)=\frac{(1+H)^5\cdot 2H^2}{(1+H)(1+2H)}$ to $\mathcal{M}(X)$ (which was computed in Example (4.1)) we have

\[
c_{\text{SM}}(X)=\frac{(1+H)^5\cdot 2H^2+(1+H)^3\cdot H^3}{(1+H)(1+2H)}.
\]
\\
Thus\footnote{In particular, this tells us that $\chi_{\text{top}}(Z)=4$.}

\begin{eqnarray*}
\mathcal{M}(Z)&=& \frac{(1+H)^2(1-H)\cdot 2H^2}{(1+3H)} \\
              &=&2[\mathbb{P}^2]-4[\mathbb{P}^1]+10[\mathbb{P}^0].
\end{eqnarray*}
\\
Plugging $\mathcal{M}(Z)$ into equation (4.2) then yields

\[
s(Z_s,\mathbb{P}^4)=2[\mathbb{P}^2]-4[\mathbb{P}^1],
\]
\\
as desired.

\end{proof}

\bibliographystyle{plain}
\bibliography{mybib}

\end{document}